\documentclass{proc-l}
  \overfullrule=5pt
\usepackage{amssymb,amsthm,amsmath}
\newcommand{\ba}{\mathfrak{a}}
\newcommand{\bb}{\mathfrak{b}}
\newcommand{\bA}{\mathbb{A}}
\newcommand{\bQ}{\mathbb{Q}}
\newcommand{\bR}{\mathbb{R}}
\newcommand{\mJ}{\mathcal{J}}
\DeclareMathOperator{\Spec}{{Spec}}
\DeclareMathOperator{\Hom}{Hom}
\DeclareMathOperator{\Div}{{div}}

\newtheorem{theorem}{Theorem}[section]
\newtheorem{lemma}[theorem]{Lemma}
\theoremstyle{definition}
\newtheorem{definition}[theorem]{Definition}
\newtheorem{example}[theorem]{Example}
\theoremstyle{remark}
\newtheorem{remark}[theorem]{Remark}

\input xy
\xyoption{all}


\begin{document}

\title{A note on discreteness of $F$-jumping numbers}
\author{Karl Schwede}
\begin{abstract}
Suppose that $R$ is a ring essentially of finite type over a perfect field of characteristic $p > 0$ and that $\ba \subseteq R$ is an ideal.  We prove that the set of $F$-jumping numbers of $\tau_b(R; \ba^t)$ has no limit points under the assumption that $R$ is normal and $\bQ$-Gorenstein -- we do \emph{not} assume that the $\bQ$-Gorenstein index is not divisible by $p$.  Furthermore, we also show that the $F$-jumping numbers of $\tau_b(R; \Delta, \ba^t)$ are discrete under the more general assumption that $K_R + \Delta$ is $\bR$-Cartier.
\end{abstract}
\subjclass[2000]{13A35, 14F18, 14B05}
\keywords{test ideal, jumping number, $\bQ$-Gorenstein, multiplier ideal}
\address{Department of Mathematics\\ University of Utah \\ Salt Lake City, UT, 84112}
\email{kschwede@umich.edu}
\thanks{The author was partially supported by a National Science Foundation postdoctoral fellowship and by NSF grant DMS-1064485/0969145.}

\maketitle

\section{Introduction}

The test ideal is an important and subtle object associated to ideals $\ba$ in positive characteristic rings $R$.   It measures the singularities of both the ambient ring and the elements of the ideal; see \cite{HaraYoshidaGeneralizationOfTightClosure}.  While the test ideal was initially introduced in the celebrated theory of tight closure of Hochster and Huneke (see \cite{HochsterHunekeTC1}), more recent interest in the test ideal has been in regards to its connection with the multiplier ideal -- a fundamental invariant of higher dimensional algebraic geometry in characteristic zero; see for example \cite{TakagiFormulasForMultiplierIdeals} or \cite{MustataYoshidaTestIdealVsMultiplierIdeals}.

Given a normal ring $R$ essentially of finite type over a perfect field of characteristic $p > 0$, an ideal $\ba \subseteq R$ and a real number $t \geq 0$, one can form the (big) test ideal $\tau_b(R; \ba^t)$ -- an object which measures both algebraic and arithmetic properties of $R$ and $\ba$.  Inspired by the test ideal's close relation with the multiplier ideal $\mJ(R, \ba^t)$, people have studied the numbers $t_i$ where $\tau_b(R; \ba^{t_i})$ changes.  That is, people have studied the \emph{$F$-jumping numbers}, see \cite{MustataTakagiWatanabeFThresholdsAndBernsteinSato}; real numbers which are by definition the $t_i > 0$ such that for every $\varepsilon > 0$,
\[
\tau_b(R; \ba^{t_i - \varepsilon}) \neq \tau_b(R; \ba^{t_i}).
\]

One easy to observe fact about multiplier ideals is that their jumping numbers are discrete and rational, at least when $R$ is $\bQ$-Gorenstein and normal; see \cite{EinLazSmithVarJumpingCoeffs}.  Here, by discrete we mean that the set of jumping numbers with respect to a fixed ideal have no limit points.
Because of this, various groups have recently worked to show that the $F$-jumping numbers of the test ideal are also discrete and rational; see \cite{HaraMonskyFPureThresholdsAndFJumpingExponents}, \cite{BlickleMustataSmithDiscretenessAndRationalityOfFThresholds}, \cite{BlickleMustataSmithFThresholdsOfHypersurfaces}, \cite{KatzmanLyubeznikZhangOnDiscretenessAndRationality}, and \cite{BlickleSchwedeTakagiZhang}.  In the most recent mentioned work, the author along with M. Blickle, S. Takagi, and W. Zhang, showed that the $F$-jumping numbers of test ideals formed a discrete set of rational numbers when $R$ is normal and $\bQ$-Gorenstein \emph{with index not divisible by $p > 0$.}  Recall that the \emph{index} of a $\bQ$-Gorenstein ring $R$ is the smallest natural number $n$ where $\omega_R^{(n)} = \O_{\Spec R}(nK_R)$ is locally free.

The most fundamental case left open is the case when $R$ is $\bQ$-Gorenstein, but of arbitrary index, see \cite[Question 6.1]{BlickleSchwedeTakagiZhang}.  We answer this question at least for discreteness.
\vskip 6pt
\hskip -12pt
{\bf Theorem \ref{MainTheorem}.} {\it
Suppose that $R$ is a normal domain essentially of finite type over an $F$-finite field.  Further suppose that $\ba \subseteq R$ is an ideal and $\Delta$ is an $\bR$-divisor on $X = \Spec R$ such that $K_X + \Delta$ is $\bR$-Cartier (for example, this holds if $\Delta = 0$ and $R$ is $\bQ$-Gorenstein).  Then, as $t$ varies, the $F$-jumping numbers of $\tau_b(R; \Delta, \ba^t)$ have no limit points -- they are discrete.
}
\vskip 6pt
\hskip -12pt
We also point out why the existing proofs of \emph{rationality} do not seem to work in the case that $R$ is $\bQ$-Gorenstein with index divisible by $p$.

Recently, in \cite{DeFernexHaconSingsOnNormal}, de Fernex and Hacon gave a definition of the multiplier ideal without the $\bQ$-Gorenstein assumption and asked the question of whether discreteness and rationality of the $F$-jumping numbers still holds in this context.  Following this, Urbinati showed that rationality need not hold but gave some evidence that discreteness may hold in general, see \cite{UrbinatiDiscOfNonQGorenVars}.  This suggests that one should not expect rationality to hold in positive characteristic either.
\vskip 3pt
\hskip -12pt{\it Acknowledgements:}  The author would like to thank Manuel Blickle and Kevin Tucker for several stimulating and valuable conversations.  The author would also like to thank the referee for numerous helpful suggestions.

\section{Definition of the test ideal}

We only give a very brief description of the big test ideal in this paper.  Please see \cite{BlickleSchwedeTakagiZhang} for a more detailed description of the test ideal.

First we fix some notation.  Given a ring $R$ of characteristic $p > 0$ and $M$ an $R$-module, we set $F^e_* M$ to be the $R$-module which agrees with $M$ as an additive group but where the $R$-module structure is defined by the rule $r . m = r^{p^e} m$.  Also recall that $R$ is said to be \emph{$F$-finite} if $F^e_* R$ is a finitely generated $R$-module.
\vskip 3pt
\noindent
  \begin{tabular}[c]{ll}
    {\scshape Convention:} & \parbox[t]{3.5in}{Throughout this note, all rings will be
  assumed to be $F$-finite.}\\
  \end{tabular}
\vskip 6pt
Recall that an $\bR$-divisor on a normal scheme $X$ is a formal linear combination of prime Weil divisors $D_i$ with real coefficients.  An $\bR$-divisor $D$ is called \emph{$\bR$-Cartier} if it is equal to an $\bR$-linear combination of Cartier divisors.

We now define the test ideal $\tau_b(R; \Delta, \ba^t \bb_1^{s_1} \dots \bb_m^{s_m})$.  We work in this greater generality because when proving our main theorem, we perturb our initial triple $(R, \Delta, \ba^t)$ to a new triple $(R, \Delta', \ba^t \bb_1^{s_1} \dots \bb_m^{s_m})$ which has the same test ideal.

\begin{definition} \cite{HochsterHunekeTC1}, \cite{HochsterFoundations}, \cite{SchwedeCentersOfFPurity}
Suppose that $R$ is an $F$-finite normal domain, $\Delta \geq 0$ is an $\bR$-divisor on $X = \Spec R$, $\ba, \bb_1, \dots \bb_m \subseteq R$ are non-zero ideals and $t, s_1, \dots, s_m \geq 0$ are real numbers.  Then the \emph{big test ideal} $\tau_b(R; \Delta, \ba^t \bb_1^{s_1} \dots \bb_m^{s_m})$ is defined to be the unique smallest non-zero ideal $J \subseteq R$ such that
\begin{equation}
\label{eqnDefnOfTestIdeal}
\phi\left(F^e_* (\ba^{\lceil t(p^e - 1) \rceil} \bb_1^{\lceil s_1(p^e - 1) \rceil} \dots \bb_m^{\lceil s_m(p^e - 1) \rceil} J) \right) \subseteq J
\end{equation}
for every $e \geq 0$ and every $\phi \in \Hom_R(F^e_* R(\lceil (p^e -1)\Delta \rceil), R)$. This ideal always exists in the context described.
\end{definition}

\begin{remark}
 In the case that $K_X + \Delta$ is $\bQ$-Cartier, the \emph{big test ideal} is known to equal the \emph{(finitistic) test ideal} (which we will not define here); see \cite{TakagiInterpretationOfMultiplierIdeals} and \cite{BlickleSchwedeTakagiZhang} for details.
\end{remark}

If all $\bb_i = R$, then we denote the associated big test ideal by $\tau_b(R; \Delta, \ba^t)$.  Likewise if $\Delta = 0$, then we denote the associated big test ideal using the notation $\tau_b(R; \ba^t \bb_1^{s_1} \dots \bb_m^{s_m})$.  Finally if the $\bb_i = (f_i)$ are principal, we denote the associated big test ideal by $\tau_b(R; \Delta, \ba^t f_1^{s_1} \dots f_m^{s_m})$

\begin{remark}\label{remTestIdealViaTestElements}
Given a non-zero element $c \in \tau_b(R; \Delta, \ba^t \bb_1^{s_1} \dots \bb_m^{s_m})$ (such an element is called a \emph{big sharp test element}), we note that
\begin{equation}\label{eqnTestIdealsViaTestElements}
\tau_b(R; \Delta, \ba^t \bb_1^{s_1} \dots \bb_m^{s_m}) = \sum_{e \geq 0} \sum_{\phi} \phi\left(F^e_* (c \ba^{\lceil t(p^e - 1) \rceil} \bb_1^{\lceil s_1(p^e - 1) \rceil} \dots \bb_m^{\lceil s_m(p^e - 1) \rceil}) \right)
\end{equation}
where the inner sum is over $\phi \in \Hom_R(F^e_* R(\lceil (p^e -1)\Delta \rceil), R)$.  To see this, simply note that the right side satisfies condition \ref{eqnDefnOfTestIdeal}, and it is by definition the smallest ideal containing $c$ satisfying condition \ref{eqnDefnOfTestIdeal}, note $\ba^0 = \bb_i^0 = R$.
\end{remark}

Suppose that $X = \Spec R$ is normal.  Then given $\phi \in \Hom_R(F^e_* R, R) \cong F^e_* \O_X((1-p^e)K_X)$, we may view $\phi$ as determining an effective Weil divisor linearly equivalent to $(1-p^e)K_X$.

\begin{definition}
We use $D_{\phi}$ to denote the Weil divisor associated to $\phi$ in this way.
\end{definition}

Given an $\bR$-divisor $\Delta \geq 0$ on $X$, one has an inclusion \begin{equation}\label{eqnNiceInclusion}\Hom_R(F^e_* R(\lceil (p^e - 1)\Delta \rceil), R) \subseteq \Hom_R(F^e_* R, R).\end{equation}  The following lemma gives a nice interpretation of this submodule.

\begin{lemma}
An element $\phi \in \Hom_R(F^e_* R, R)$ is contained inside the submodule $\Hom_R(F^e_* R(\lceil (p^e - 1)\Delta \rceil), R)$ if and only if $D_{\phi} \geq (p^e - 1)\Delta$.
\end{lemma}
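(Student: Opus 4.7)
\medskip

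The plan is to identify both sides of the claimed equivalence with conditions on divisors via Grothendieck--Frobenius duality on the normal scheme $X = \Spec R$, and then observe that the integrality of $D_\phi$ makes the ceiling harmless.

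First I would set up the identifications. For any Weil divisor $E$ on $X$, Grothendieck duality for the finite Frobenius morphism $F^e \colon X \to X$ gives a canonical isomorphism
\[
\Hom_R(F^e_* R(E), R) \;\cong\; F^e_* R\bigl((1-p^e)K_X - E\bigr),
\]
agreeing as subsheaves of $F^e_* K(X)$ (where $K(X)$ is the function field) with the standard identification when $E = 0$, namely $\Hom_R(F^e_* R, R) \cong F^e_* R((1-p^e)K_X)$. Taking $E = \lceil (p^e - 1)\Delta\rceil$, the inclusion \eqref{eqnNiceInclusion} becomes the natural inclusion of reflexive sheaves
\[
F^e_* R\bigl((1-p^e)K_X - \lceil(p^e-1)\Delta\rceil\bigr) \;\subseteq\; F^e_* R\bigl((1-p^e)K_X\bigr),
\]
which holds because $\lceil(p^e-1)\Delta\rceil \geq 0$ (as $\Delta \geq 0$).

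Next I would translate containment into the divisor condition. By definition $D_\phi$ is the effective Weil divisor on $X$ determined by $\phi$ viewed as a nonzero rational section of $\O_X((1-p^e)K_X)$; concretely $D_\phi = \Div(f) + (1-p^e)K_X$ when $\phi$ corresponds to the rational function $f$. Then $\phi$ extends to a map in $\Hom_R(F^e_*R(\lceil(p^e-1)\Delta\rceil), R)$ if and only if, as a rational section, it lies in the subsheaf $\O_X((1-p^e)K_X - \lceil(p^e-1)\Delta\rceil)$, i.e.\ if and only if
\[
\Div(f) + (1-p^e)K_X - \lceil(p^e-1)\Delta\rceil \geq 0,
\]
which is exactly $D_\phi \geq \lceil(p^e-1)\Delta\rceil$.

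Finally, since $D_\phi$ is an integral Weil divisor, the inequality $D_\phi \geq \lceil(p^e-1)\Delta\rceil$ is equivalent to $D_\phi \geq (p^e-1)\Delta$, giving the stated characterization. The main (mild) technical point to be careful about is the Grothendieck duality identification in step one for non-Cartier $E$: one either invokes the standard reflexivity formalism on a normal scheme (checking in codimension one, where $E$ becomes Cartier, and extending by $S_2$-ness of Hom into a reflexive sheaf), or cites the analogous compatibility already used implicitly in \cite{BlickleSchwedeTakagiZhang}. Everything else is formal.
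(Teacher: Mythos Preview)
Your proof is correct and follows essentially the same logic as the paper's: both reduce the containment to the divisor inequality $D_\phi \geq \lceil (p^e-1)\Delta\rceil$ and then drop the ceiling by integrality of $D_\phi$. The only cosmetic difference is packaging---you phrase the reduction via the global Grothendieck duality isomorphism $\Hom_R(F^e_*R(E),R)\cong F^e_*R((1-p^e)K_X - E)$, whereas the paper localizes directly to a DVR and computes there; as you yourself note, justifying your duality isomorphism for non-Cartier $E$ amounts to exactly that codimension-one check plus reflexivity.
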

\begin{proof}
Because all the module are reflexive the statement can be reduced to the case when $R$ is a discrete valuation ring and $\Delta = s \Div(x)$ where $x$ is the parameter for the DVR $R$ and $s \geq 0$ is a real number.  In this case, the inclusion from equation \ref{eqnNiceInclusion} can be identified with the  multiplication map $R \to R$ which sends $1$ to $x^{\lceil s(p^e - 1) \rceil}$.  Thus, $\phi \in \Hom_R(F^e_* R, R) \cong R$ is contained inside $\Hom_R(F^e_* R(\lceil (p^e - 1)\Delta \rceil), R) \cong x^{\lceil s(p^e - 1) \rceil} R$ if and only if $D_{\phi} \geq \lceil s(p^e - 1) \rceil \Div(x) = \lceil (p^e - 1) \Delta \rceil$.  However, since $D_{\phi}$ is integral, it is harmless to remove the round-up $\lceil \cdot \rceil$.
\end{proof}

\section{Discreteness of $F$-jumping numbers}

In this section we prove our main result.  We accomplish this by perturbing our triples $(R, \Delta, \ba^t)$ in order to reduce the discreteness statement to the case where the (log) $\bQ$-Gorenstein index is not divisible by $p > 0$.  First we need a lemma.

\begin{lemma}
\label{lemma:TwistIn}
 Suppose that $(X = \Spec R, \Delta, \ba^t \bb_1^{s_1} \dots \bb_m^{s_m})$ is a triple and that $\Delta = \Gamma + b \Div(f)$ for some $f \in R \setminus \{ 0 \}$ and nonnegative number $b \in \bR$.  Then
\[
 \tau_b(R; \Delta, \ba^t \bb_1^{s_1} \dots \bb_m^{s_m}) = \tau_b(R; \Gamma, f^b \ba^t  \bb_1^{s_1} \dots \bb_m^{s_m}).
\]
\end{lemma}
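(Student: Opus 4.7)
The plan is to prove both containments directly from the definitions, using Remark \ref{remTestIdealViaTestElements} (the test-element description of $\tau_b$) for the harder direction. Write $J := \tau_b(R;\Delta,\ba^t\bb_1^{s_1}\cdots\bb_m^{s_m})$, $J' := \tau_b(R;\Gamma,f^b\ba^t\bb_1^{s_1}\cdots\bb_m^{s_m})$, $n_e := \lceil b(p^e-1)\rceil$, and $k_e := \lfloor b(p^e-1)\rfloor$. By the preceding lemma, admissibility of $\phi\in\Hom_R(F^e_*R,R)$ for either test ideal is a divisorial condition: $\phi$ lies in $\Hom_R(F^e_*R(\lceil(p^e-1)\Xi\rceil),R)$ iff $D_\phi\geq(p^e-1)\Xi$.

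For the inclusion $J'\subseteq J$, I will verify that $J$ satisfies the defining property of $J'$. Given $\psi\in\Hom_R(F^e_*R(\lceil(p^e-1)\Gamma\rceil),R)$, the map $\phi(-):=\psi(F^e_*(f^{n_e}\cdot-))$ satisfies $D_\phi = D_\psi + n_e\Div(f)\geq(p^e-1)\Gamma+b(p^e-1)\Div(f) = (p^e-1)\Delta$, so $\phi$ is admissible for $J$. Hence
\[
\psi\bigl(F^e_*(f^{n_e}\ba^{\lceil t(p^e-1)\rceil}\bb_1^{\lceil s_1(p^e-1)\rceil}\cdots J)\bigr) = \phi\bigl(F^e_*(\ba^{\lceil t(p^e-1)\rceil}\bb_1^{\lceil s_1(p^e-1)\rceil}\cdots J)\bigr) \subseteq J,
\]
and minimality of $J'$ yields $J'\subseteq J$.

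The reverse inclusion is the main obstacle: when $b(p^e-1)\notin\mathbb{Z}$ one has $n_e = k_e+1$, and a $\phi$ admissible for $J$ cannot in general be written as $\psi\circ F^e_*(f^{n_e}\cdot-)$ with $\psi$ admissible for $J'$. My remedy is to choose a nonzero common test element $c_0\in J\cap J'$ (available since $R$ is a domain and both test ideals are nonzero by definition) and work with $c:=fc_0\in J\cap J'\cap fJ'$. Remark \ref{remTestIdealViaTestElements} applied to $J$ presents
\[
J = \sum_{e,\phi}\phi\bigl(F^e_*(c\,\ba^{\lceil t(p^e-1)\rceil}\bb_1^{\lceil s_1(p^e-1)\rceil}\cdots\bb_m^{\lceil s_m(p^e-1)\rceil})\bigr),
\]
so it suffices to place every summand in $J'$. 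For such a $\phi$, division by $f^{k_e}$ produces $\psi\in\Hom_R(F^e_*R,R)$ with $D_\psi = D_\phi - k_e\Div(f)$ and $\phi(-) = \psi(F^e_*(f^{k_e}\cdot-))$; a direct check (using $k_e\leq b(p^e-1)$ and $\Gamma, \Div(f)\geq 0$) shows that $D_\psi$ is effective and $\geq (p^e-1)\Gamma$, so $\psi$ is admissible for $J'$. Substituting $c=fc_0$, the summand becomes $\psi(F^e_*(f^{k_e+1}c_0\,\ba^{\lceil t(p^e-1)\rceil}\cdots))$, and since $k_e+1\geq n_e$ and $c_0\in J'$, the argument of $\psi$ sits inside $F^e_*(f^{n_e}\ba^{\lceil t(p^e-1)\rceil}\bb_1^{\lceil s_1(p^e-1)\rceil}\cdots J')$; the defining property of $J'$ then forces the whole expression into $J'$. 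The single extra power of $f$ built into the test element supplies, uniformly in $e$, the one unit of slack needed to pass from $f^{k_e}$ to $f^{n_e}$ and overcome the ceiling/floor asymmetry.
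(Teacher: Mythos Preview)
Your proof is correct and follows essentially the same approach as the paper's: both directions hinge on absorbing powers of $f$ into (or extracting them from) the map $\phi$, and the reverse inclusion uses the identical trick of building an extra factor of $f$ into the test element to bridge the gap between $\lfloor b(p^e-1)\rfloor$ and $\lceil b(p^e-1)\rceil$. The only cosmetic difference is that you invoke minimality/stability of the test ideals where the paper stays entirely within the test-element sum formula of Remark~\ref{remTestIdealViaTestElements}.
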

This type of statement is essentially obvious for multiplier ideals, but because of certain issues surrounding the construction of test ideals we have thus-far presented, it is somewhat less obvious in this context.  However, it is still quite straightforward especially from definition of the generalized test ideal by Hara-Yoshida-Takagi (the proof in that case uses the theory tight closure), see \cite{HaraYoshidaGeneralizationOfTightClosure} and \cite{TakagiInterpretationOfMultiplierIdeals}.  We provide a short proof here certainly acknowledging that this statement is obvious to experts.
\begin{proof}
 Choose $c$ to be a big sharp test element for both $(X, \Delta, \ba^t \bb_1^{s_1} \dots \bb_m^{s_m})$ and $(X, \Gamma, f^b \ba^t \bb_1^{s_1} \dots \bb_m^{s_m})$.  Then we know the test ideal $\tau_b(R; \Gamma, f^b \ba^t \bb_1^{s_1} \dots \bb_m^{s_m})$ equals,
\[
\begin{split}
 = \sum_{\phi,\,  D_{\phi} \geq (p^e - 1) \Gamma} \phi\left(F^e_* c f^{\lceil b(p^e - 1) \rceil} \ba^{\lceil t(p^e - 1) \rceil} \bb_1^{\lceil s_1(p^e - 1) \rceil} \dots \bb_m^{\lceil s_m(p^e - 1) \rceil} \right)\\
= \sum_{\phi,\, D_{\phi} \geq (p^e - 1)\Gamma + \Div{f^{\lceil b(p^e - 1) \rceil}}} \phi\left(F^e_* c  \ba^{\lceil t(p^e - 1) \rceil} \bb_1^{\lceil s_1(p^e - 1) \rceil} \dots \bb_m^{\lceil s_m(p^e - 1) \rceil} \right)\\
\subseteq \sum_{\phi,\,  D_{\phi} \geq (p^e - 1)\Gamma + \lceil b(p^e -1) \Div(f) \rceil} \phi\left(F^e_* c  \ba^{\lceil t(p^e - 1) \rceil} \bb_1^{\lceil s_1(p^e - 1) \rceil} \dots \bb_m^{\lceil s_m(p^e - 1) \rceil} \right)\\
\subseteq \sum_{\phi,\,  D_{\phi} \geq (p^e - 1)\Gamma + b(p^e -1) \Div(f)} \phi\left(F^e_* c  \ba^{\lceil t(p^e - 1) \rceil} \bb_1^{\lceil s_1(p^e - 1) \rceil} \dots \bb_m^{\lceil s_m(p^e - 1) \rceil} \right)\\
= \tau_b(R; \Delta, \ba^t \bb_1^{s_1} \dots \bb_m^{s_m}).
\end{split}
\]
and so $\tau_b(R; \Gamma, f^b \ba^t \bb_1^{s_1} \dots \bb_m^{s_m}) \subseteq \tau_b(R; \Delta, \ba^t \bb_1^{s_1} \dots \bb_m^{s_m})$.  For the converse inclusion, observe first that
\[
 \Div(f^{\lceil b(p^e - 1)\rceil}) - b(p^e - 1) \Div(f) \leq \Div(f).
\]
  Thus, since $cf$ is also a test element,
\[
 \begin{split}
  \tau_b(R, \Delta, \ba^t \bb_1^{s_1} \dots \bb_m^{s_m}) = \sum_{\phi,\, D_{\phi} \geq (p^e - 1)\Delta} \phi\left(F^e_* c f  \ba^{\lceil t(p^e - 1) \rceil} \bb_1^{\lceil s_1(p^e - 1) \rceil} \dots \bb_m^{\lceil s_m(p^e - 1) \rceil} \right)\\
 = \sum_{\phi,\, D_{\phi} \geq (p^e - 1)\Gamma + b(p^e -1) \Div(f)} \phi\left(F^e_* c f \ba^{\lceil t(p^e - 1) \rceil} \bb_1^{\lceil s_1(p^e - 1) \rceil} \dots \bb_m^{\lceil s_m(p^e - 1) \rceil} \right)\\
= \sum_{\phi,\, D_{\phi} \geq (p^e - 1)\Gamma + b(p^e -1) \Div(f) + \Div(f) } \phi\left(F^e_* c \ba^{\lceil t(p^e - 1) \rceil}  \bb_1^{\lceil s_1(p^e - 1) \rceil} \dots \bb_m^{\lceil s_m(p^e - 1) \rceil} \right)\\
\subseteq \sum_{\phi,\, D_{\phi} \geq (p^e - 1)\Gamma + \Div(f^{\lceil b(p^e - 1)\rceil}) } \phi\left(F^e_* c \ba^{\lceil t(p^e - 1) \rceil} \bb_1^{\lceil s_1(p^e - 1) \rceil} \dots \bb_m^{\lceil s_m(p^e - 1) \rceil}\right)\\
= \sum_{\phi,\, D_{\phi} \geq (p^e - 1)\Gamma } \phi\left(F^e_* c f^{\lceil b(p^e - 1)\rceil}\ba^{\lceil t(p^e - 1) \rceil} \bb_1^{\lceil s_1(p^e - 1) \rceil} \dots \bb_m^{\lceil s_m(p^e - 1) \rceil}\right) \\
= \tau_b(R, \Gamma, f^b \ba^t \bb_1^{s_1} \dots \bb_m^{s_m})
 \end{split}
\]
and so $\tau_b(R; \Delta, \ba^t \bb_1^{s_1} \dots \bb_m^{s_m}) \subseteq \tau_b(R; \Gamma, f^b \ba^t \bb_1^{s_1} \dots \bb_m^{s_m})$ as desired.
\end{proof}

We also need a very special case of Skoda's theorem.

\begin{lemma} \cite[Theorem 4.1]{HaraTakagiOnAGeneralizationOfTestIdeals}
\label{Lemma:SkodaTypeTheorem}
Suppose that $X = \Spec R$, $\Delta > 0$, $\ba \subseteq R$ and $t \geq 0$ is as above.  Further suppose that $f \in R$ is a non-zero element.  Then
\[
 \tau_b(X; \Delta + \Div(f), \ba^t \bb_1^{s_1} \dots \bb_m^{s_m}) = f \tau_b(X; \Delta, \ba^{t} \bb_1^{s_1} \dots \bb_m^{s_m}) .
\]
\end{lemma}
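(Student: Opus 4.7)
The plan is to first apply Lemma \ref{lemma:TwistIn} with $b = 1$ and $\Gamma = \Delta$, which converts $\Div(f)$ in the boundary into a factor of $f$ in the ideal coefficients. This reduces the statement to the Skoda-type identity
\[
\tau_b(R; \Delta, f \ba^t \bb_1^{s_1} \dots \bb_m^{s_m}) = f \cdot \tau_b(R; \Delta, \ba^t \bb_1^{s_1} \dots \bb_m^{s_m}),
\]
and has the advantage that I never need to inspect $\Hom_R(F^e_* R(\lceil (p^e-1)(\Delta + \Div(f)) \rceil), R)$ directly.

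To prove both inclusions in one stroke, I would pick any non-zero $c' \in \tau_b(R; \Delta, f \ba^t \bb_1^{s_1} \dots \bb_m^{s_m})$. Since $\tau_b(R; \Delta, f \ba^t \bb_1^{s_1} \dots \bb_m^{s_m}) \subseteq \tau_b(R; \Delta, \ba^t \bb_1^{s_1} \dots \bb_m^{s_m})$ by monotonicity (immediate from the defining closure condition), $c'$ is also a big sharp test element for the right-hand side. Moreover, since $R$ is a domain and the left-hand side is an ideal, $fc'$ is a non-zero element of that ideal, hence a big sharp test element for the left-hand side as well.

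Applying Remark \ref{remTestIdealViaTestElements} to the left-hand side with the test element $fc'$, and then invoking the $R$-linearity identity $\phi(F^e_*(f^{p^e} y)) = f \cdot \phi(F^e_*(y))$ on each summand (which holds because $f \cdot F^e_*(y) = F^e_*(f^{p^e} y)$ under the twisted $R$-module structure on $F^e_* R$), yields
\begin{align*}
\tau_b(R; \Delta, f \ba^t \bb_1^{s_1} \dots \bb_m^{s_m}) &= \sum_{e, \phi} \phi\bigl(F^e_*(c' f^{p^e} \ba^{\lceil t(p^e-1)\rceil} \bb_1^{\lceil s_1(p^e-1)\rceil} \dots \bb_m^{\lceil s_m(p^e-1)\rceil})\bigr) \\
&= f \cdot \sum_{e, \phi} \phi\bigl(F^e_*(c' \ba^{\lceil t(p^e-1)\rceil} \bb_1^{\lceil s_1(p^e-1)\rceil} \dots \bb_m^{\lceil s_m(p^e-1)\rceil})\bigr) \\
&= f \cdot \tau_b(R; \Delta, \ba^t \bb_1^{s_1} \dots \bb_m^{s_m}),
\end{align*}
where the final equality is a second application of Remark \ref{remTestIdealViaTestElements}, now with test element $c'$.

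The main trick is the choice of test element $fc'$ on the left: this is precisely what converts the awkward inner factor $f^{\lceil 1 \cdot (p^e-1) \rceil} = f^{p^e-1}$ into a full $f^{p^e} = f \cdot f^{p^e-1}$, which then factors out through $\phi$ by $R$-linearity. After the reduction via Lemma \ref{lemma:TwistIn} has handled the divisorial side, I do not expect any further substantive obstacle.
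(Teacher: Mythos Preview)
Your argument is correct. The paper itself does not give a proof of this lemma --- it simply refers the reader to \cite[Theorem 4.2]{HaraTakagiOnAGeneralizationOfTestIdeals} and \cite[Lemma 3.26]{BlickleSchwedeTakagiZhang} --- so there is nothing to compare against directly.

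A couple of remarks on your approach. First, the reduction via Lemma \ref{lemma:TwistIn} with $b=1$ is a clean way to avoid ever unpacking $\Hom_R(F^e_* R(\lceil (p^e-1)(\Delta+\Div(f))\rceil),R)$; the alternative would be to observe directly that elements of this module are exactly the maps $\psi(F^e_*(f^{p^e-1}\,\cdot\,))$ with $D_{\psi}\geq (p^e-1)\Delta$, and then run the same $fc'$ trick. Both routes land on the same computation. Second, your monotonicity step is fine: if $J$ satisfies the closure condition \eqref{eqnDefnOfTestIdeal} for $(R,\Delta,\ba^t\bb_1^{s_1}\dots\bb_m^{s_m})$ then it trivially satisfies it for $(R,\Delta,f\,\ba^t\bb_1^{s_1}\dots\bb_m^{s_m})$, so the smallest such ideal for the latter is contained in the former. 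The choice of $fc'$ as the test element on the left is indeed the whole point, since it upgrades $f^{p^e-1}$ to $f^{p^e}$ and lets $R$-linearity of $\phi$ extract the factor of $f$.
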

\begin{proof}
 We leave the proof to reader; see \cite[Theorem 4.2]{HaraTakagiOnAGeneralizationOfTestIdeals} and \cite[Lemma 3.26]{BlickleSchwedeTakagiZhang}.
\end{proof}

And now we can prove the following result.

\begin{theorem}
\label{Thm:TranslateTestIdealQuestion}
 Suppose that $R$ is an $F$-finite normal domain and further suppose that $(X = \Spec R, \Delta, \ba^t)$ is a triple where $K_X + \Delta$ is $\bR$-Cartier.  Then for each point $x \in X$, there exists an open set $U = \Spec R' = \Spec R[h^{-1}]$ containing $x \in X$ with the following properties:  There exists an effective $\bQ$-divisor $\Gamma$ on $U$, elements $f_{1}, \dots f_{m} \in R' \setminus \{0\}$ and nonnegative real numbers $b_1, \dots, b_{m}$ such that
\begin{itemize}
 \item[(1)]  $K_{U} + \Gamma$ is $\bQ$-Cartier with index not divisible by $p > 0$ and furthermore, $(p^e - 1)(K_{U} + \Gamma) \sim 0$ for some integer $e > 0$.
 \item[(2)]  The $F$-jumping numbers of $\tau_b(U, \Gamma, f_1^{b_1} \dots f_m^{b_m} (\ba R')^t)$ are the same as the $F$-jumping numbers of $\tau_b(U, \Delta|_{U}, (\ba R')^t)$ (both sets of jumping numbers are with respect to $t$).
\end{itemize}
\end{theorem}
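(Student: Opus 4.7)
My plan is to perturb the real coefficients in a local principal-divisor decomposition of $K_U+\Delta$ into rationals with denominator coprime to $p$, and to absorb the discrepancy into the ideal-factor slot of the test ideal via Lemma \ref{lemma:TwistIn}. An extra $\Div(H)$ boost, traded across using Lemma \ref{Lemma:SkodaTypeTheorem}, handles the positivity of $\Gamma$ along prime divisors where $\Delta$ has coefficient zero.

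First I would shrink $X$ to an affine open $U=\Spec R'=\Spec R[h^{-1}]$ containing $x$ on which every Cartier divisor in a fixed $\bR$-linear decomposition of $K_X+\Delta$ becomes principal. Writing each principal divisor as $\Div(a/b)=\Div(a)-\Div(b)$ and grouping by sign yields a presentation
\[
K_U+\Delta|_U \;=\; \sum_l c_l\,\Div(e_l) \;-\; \sum_l d_l\,\Div(F_l)
\]
with $c_l,d_l\in\bR_{\geq 0}$ and $e_l,F_l\in R'\setminus\{0\}$. I then pick rationals $p_l\leq c_l$ and $q_l\geq d_l$ with a common denominator $N$ coprime to $p$, approximating $c_l,d_l$ as tightly as the subsequent positivity check demands. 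Setting
\[
K_U+\Gamma \;:=\; \sum_l p_l\,\Div(e_l) \;-\; \sum_l q_l\,\Div(F_l) \;+\; \Div(H)
\]
for an $H\in R'\setminus\{0\}$ to be chosen below produces a $\bQ$-Cartier divisor of index dividing $N$; choosing $e\geq 1$ with $N\mid p^e-1$ (possible as $\gcd(N,p)=1$) then gives $(p^e-1)(K_U+\Gamma)\sim 0$, which is property (1). Subtracting $K_U$ on both sides exhibits $\Gamma$ as a rational combination of integer Weil divisors, so $\Gamma$ is a $\bQ$-divisor.

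The main obstacle is arranging $\Gamma\geq 0$. Unwinding the definitions gives $\Gamma = \Delta|_U+\Div(H)-\sum_l(c_l-p_l)\Div(e_l)-\sum_l(q_l-d_l)\Div(F_l)$. On a prime divisor $E$ lying outside the supports of all the $e_l,F_l$ the correction vanishes, and the coefficient of $E$ in $\Gamma$ is the nonnegative quantity $\operatorname{coeff}_E(\Delta|_U)+v_E(H)$; on a prime $E$ inside one of these supports with $\operatorname{coeff}_E(\Delta|_U)>0$, tightening $p_l,q_l$ close enough to $c_l,d_l$ keeps the small correction below $\operatorname{coeff}_E(\Delta|_U)$. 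The remaining danger is prime divisors $E$ with $\operatorname{coeff}_E(\Delta|_U)=0$ lying in the support of some $e_l$ or $F_l$; there are only finitely many such. I would shrink $U$ further to throw out those not passing through $x$, leaving finitely many bad primes $E_1,\ldots,E_s$ through $x$, and take $H\in R'$ to be a product of nonzero elements with one factor vanishing along each $E_j$, so that $v_{E_j}(H)\geq 1$ dominates the correction on $E_j$.

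Finally, by construction
\[
\Delta|_U+\Div(H) \;=\; \Gamma+\sum_l(c_l-p_l)\Div(e_l)+\sum_l(q_l-d_l)\Div(F_l),
\]
so iterating Lemma \ref{lemma:TwistIn} identifies $\tau_b(U,\Delta|_U+\Div(H),(\ba R')^t)$ with $\tau_b(U,\Gamma,\prod_l e_l^{c_l-p_l}\prod_l F_l^{q_l-d_l}(\ba R')^t)$, while Lemma \ref{Lemma:SkodaTypeTheorem} identifies the left-hand side with $H\cdot\tau_b(U,\Delta|_U,(\ba R')^t)$. Because $R'$ is a domain and $H\neq 0$, multiplication by $H$ preserves the set of $t$ at which the ideal jumps, so the two families share their $F$-jumping numbers, which is property (2) with $\{(f_i,b_i)\}$ the union of the pairs $(e_l,c_l-p_l)$ and $(F_l,q_l-d_l)$.
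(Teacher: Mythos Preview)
Your argument is correct, but it takes a different and more laborious route than the paper.  The paper does not try to perturb $K_U+\Delta$ to rational coefficients.  Instead it picks any nonzero $\phi\in\Hom_R(F^e_*R,R)$ and sets $\Gamma=\frac{1}{p^e-1}D_\phi$ on all of $X$.  This $\Gamma$ is automatically an effective $\bQ$-divisor with $(p^e-1)(K_X+\Gamma)\sim 0$, so condition~(1) comes for free and the effectiveness struggle you go through (tightening $p_l,q_l$, shrinking $U$ again, manufacturing $H$) simply does not arise.  The paper then observes that $\Delta-\Gamma$ is $\bR$-Cartier, writes it locally as $\sum d_i\Div(f_i)$, shifts by an integral principal divisor $\Div(g)$ to make all exponents $b_i=l_i+d_i$ nonnegative, and finishes with the same two lemmas you use.

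What each approach buys: the paper's choice of $\Gamma$ via $D_\phi$ is the cleanest way to produce an effective boundary with index prime to $p$, and it reduces the proof to three lines once Lemmas~\ref{lemma:TwistIn} and~\ref{Lemma:SkodaTypeTheorem} are in hand.  Your approach is more elementary in that it avoids invoking the divisor $D_\phi$ attached to a $p^{-e}$-linear map, working only with the given $\bR$-Cartier decomposition of $K_X+\Delta$; the cost is the extra bookkeeping to force $\Gamma\geq 0$.  Both routes end by applying Lemma~\ref{Lemma:SkodaTypeTheorem} to trade the integral boost into a unit factor and Lemma~\ref{lemma:TwistIn} to slide the remaining nonnegative real multiples of principal divisors from the boundary into the ideal slot.
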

\begin{proof}
Choose a non-zero section $\phi$ of $\Hom_R(F^e_* R, R)$ and set $\Gamma = {1 \over p^e - 1} D_{\phi}$, it follows that $K_X + \Gamma$ satisfies condition (1) on $X$.  Therefore, $(K_X + \Delta) - (K_X + \Gamma) = \Delta - \Gamma$ is $\bR$-Cartier and so we may write
$\Delta - \Gamma = d_1 D_1 + \dots + d_m D_m$ for some integral effective Cartier divisors $D_i$ and real numbers $d_i \in \bR$.  We choose our open set $U = \Spec R[h^{-1}] = \Spec R'$ to be any such set containing $x \in X$ where all of the $D_i|_{U}$ are principal divisors.

Now write $D_i|_{U} = \Div(f_i)$ for some $f_i \in R' \setminus 0$ and also by abuse of notation denote $\Gamma := \Gamma|_{U}$.  Choose natural numbers $l_i$ such that $b_i := l_i + d_i > 0$ for all $i$ and set $g := f_1^{l_1} \dots f_m^{l_m} \in R'$.  Notice that $(\Delta|_{U} + \Div(g)) - \Gamma = b_1 \Div(f_1) + \dots + b_m \Div(f_m).$

By Lemma \ref{Lemma:SkodaTypeTheorem}, the $F$-jumping numbers of $\tau_b(U; \Delta|_{U}, (\ba R')^t)$ and the $F$-jumping numbers of $\tau_b(U; \Delta|_{U} + \Div(g), (\ba R')^t)$ coincide.
Now using Lemma \ref{lemma:TwistIn} we have
\begin{align*}
 & \, \tau_b(U; \Delta|_{U} + \Div(g), (\ba R')^t) \\
 = & \,  \tau_b(U; \Gamma + b_1 \Div(f_1) + \dots + b_m \Div(f_m), (\ba R')^t) \\
 = & \, \tau_b(U; \Gamma, f_1^{b_1} \dots f_m^{b_m} (\ba R')^t)
\end{align*}
which proves the theorem.
\end{proof}

\begin{remark}
\label{remNeedOneOverP}
If, in Theorem \ref{Thm:TranslateTestIdealQuestion}, $K_X + \Delta$ is $\bQ$-Cartier, then one needs only a single $f_1^{b_1}$ (and no other $f_i^{b_i}$).  However, if the index of $K_X + \Delta$ is divisible by $p > 0$, then it follows by construction that $b_1$ will be a rational number with denominator divisible by $p > 0$.
\end{remark}

We are now in a position to prove the discreteness of the $F$-jumping numbers in the case that $X$ is essentially of finite type over a field.  The proof idea follows the usual lines.

\begin{theorem}\label{MainTheorem}
Suppose that $R$ is a normal domain essentially of finite type over an $F$-finite field.  Further suppose that $\ba \subseteq R$ is an ideal and $\Delta$ is an $\bR$-divisor on $X = \Spec R$ such that $K_X + \Delta$ is $\bR$-Cartier (for example, this holds if $\Delta = 0$ and $R$ is $\bQ$-Gorenstein).  Then, as $t$ varies, the $F$-jumping numbers of $\tau_b(R; \Delta, \ba^t)$ have no limit points -- they are discrete.
\end{theorem}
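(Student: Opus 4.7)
The plan is to reduce the $\bR$-Cartier case to the case where the log $\bQ$-Gorenstein index is prime to $p$ --- already handled in \cite{BlickleSchwedeTakagiZhang} --- via the local perturbation supplied by Theorem \ref{Thm:TranslateTestIdealQuestion}, together with a quasi-compactness argument.

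Since $X = \Spec R$ is Noetherian and hence quasi-compact, for each point $x \in X$ we apply Theorem \ref{Thm:TranslateTestIdealQuestion} to obtain an open affine $U_x = \Spec R[h_x^{-1}]$ containing $x$, an effective $\bQ$-divisor $\Gamma_x$ on $U_x$, elements $f_1, \dots, f_m \in R[h_x^{-1}] \setminus \{0\}$, and nonnegative real numbers $b_1, \dots, b_m$ (all depending on $x$), such that $K_{U_x} + \Gamma_x$ is $\bQ$-Cartier with index prime to $p$, and such that the $F$-jumping numbers in $t$ of $\tau_b(U_x; \Delta|_{U_x}, (\ba R[h_x^{-1}])^t)$ agree with those of $\tau_b(U_x; \Gamma_x, f_1^{b_1} \cdots f_m^{b_m} (\ba R[h_x^{-1}])^t)$. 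We then cover $X$ by finitely many such opens $U_1, \dots, U_r$.

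Next, we invoke the main discreteness theorem of \cite{BlickleSchwedeTakagiZhang}: on each $U_i$, since $K_{U_i} + \Gamma_i$ has index prime to $p$ and the auxiliary exponents $b_j$ are held fixed while only $t$ varies, the $F$-jumping numbers in $t$ of the perturbed test ideal form a discrete subset of $\bR$. To pass from local to global, observe that an $F$-jumping number $t_0$ of $\tau_b(R; \Delta, \ba^t)$ witnesses a strict containment $\tau_b(R; \Delta, \ba^{t_0 - \varepsilon}) \supsetneq \tau_b(R; \Delta, \ba^{t_0})$ for every $\varepsilon > 0$; this strict containment must persist after localization at some point of $X$, and hence on some $U_i$ in the cover. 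Therefore the set of global $F$-jumping numbers is contained in a finite union of discrete sets and is itself discrete.

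The main obstacle is confirming that the discreteness theorem from \cite{BlickleSchwedeTakagiZhang} applies to test ideals of the mixed form $\tau_b(U; \Gamma, f_1^{b_1} \cdots f_m^{b_m} \ba^t)$, with only $t$ varying while the $b_j$ are fixed. If this generality is not explicitly stated in that work, the extension should be essentially formal, as the fixed exponents $b_j$ enter the underlying Cartier-algebra analysis in exactly the same way as the additional $s_j$ already incorporated in the definition of $\tau_b$.
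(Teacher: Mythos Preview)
Your approach is essentially the same as the paper's: reduce to a finite affine cover (the paper cites \cite[Proposition 3.28]{BlickleSchwedeTakagiZhang} for the local-to-global step), apply Theorem \ref{Thm:TranslateTestIdealQuestion} on each chart, and then establish discreteness for the perturbed triples $(U,\Gamma,f_1^{b_1}\cdots f_m^{b_m}\ba^t)$ with $(p^e-1)(K_U+\Gamma)\sim 0$.

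Regarding the obstacle you flag: the paper confirms that the discreteness statement for the mixed form is \emph{not} literally available as a black box in \cite{BlickleSchwedeTakagiZhang}, but resolves it by offering two concrete routes after first reducing from essentially finite type to finite type via \cite[Lemma 4.2, Proposition 3.28]{BlickleSchwedeTakagiZhang}. Option (a) is to rerun the argument of \cite[Section 4]{BlickleSchwedeTakagiZhang} (F-adjunction to a polynomial ring, then degree bounding), noting that the extra fixed factors $f_i^{b_i}$ cause no new difficulty. Option (b) is cleaner: the Cartier-algebra of $(R,\Gamma)$ is finitely generated (since $(p^e-1)(K_X+\Gamma)\sim 0$), hence gauge bounded by \cite[Proposition 4.8]{BlickleTestIdealsViaAlgebras}; then \cite[Proposition 4.13]{BlickleTestIdealsViaAlgebras} shows the algebra for $(R,\Gamma,f_1^{b_1}\cdots f_m^{b_m})$ is also gauge bounded, and \cite[Theorem 4.14]{BlickleTestIdealsViaAlgebras} gives discreteness in $t$. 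So your intuition that the extension is ``essentially formal'' is right, but the paper supplies the precise citations that make it so.
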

\begin{proof}
By \cite[Proposition 3.28]{BlickleSchwedeTakagiZhang}, it is sufficient to answer this question on a finite affine cover of $X$.  Therefore, we reduce to the case that $X$ is one of the charts from Theorem \ref{Thm:TranslateTestIdealQuestion}.  In particular, it is sufficient to prove our result for triples of the form $\tau_b(R; \Gamma, f_1^{b_1} \dots f_m^{b_m} \ba^t)$ where $(p^e-1)(K_X + \Gamma) \sim 0$ for some $e > 0$. Using \cite[Lemma 4.2, Proposition 3.28]{BlickleSchwedeTakagiZhang}, one can further assume that $R$ is of finite type over an $F$-finite field of characteristic $p > 0$.  One then has two options:
\begin{itemize}
\item[(a)]  Mimic the proof of the main result of \cite[Section 4]{BlickleSchwedeTakagiZhang}.  In other words, use the methods of $F$-adjunction (as worked out in \cite{SchwedeFAdjunction} and \cite{BlickleSchwedeTakagiZhang}) to reduce to the case where $R$ is a polynomial ring and then use degree bounding methods similar to those found in \cite{BlickleMustataSmithDiscretenessAndRationalityOfFThresholds}.  Note that in \cite{BlickleSchwedeTakagiZhang}, one worked with triples $(R, \Delta, \ba^t)$ and not with the more complicated objects $(R, \Gamma, f_1^{b_1} \dots f_m^{b_m} \ba^t)$, but the methods are easily generalized to our setting.
\item[(b)]  Use the new language of \cite[Section 4]{BlickleTestIdealsViaAlgebras}.  We claim that the algebra of $p^{-e}$-linear maps associated to the triple $(R, \Gamma, f_1^{b_1} \dots f_m^{b_m})$, as in \cite[Remark 3.10]{SchwedeTestIdealsInNonQGor}, is ``gauge bounded'' (see \cite[Definition 4.7]{BlickleTestIdealsViaAlgebras}).  To see this claim, note that by \cite[Lemma 3.9]{SchwedeFAdjunction} or \cite[Remark 4.4]{SchwedeTestIdealsInNonQGor}, the Cartier-algebra associated to $(R, \Gamma)$ is finitely generated and thus gauge bounded by \cite[Proposition 4.8]{BlickleTestIdealsViaAlgebras}.  It follows then from \cite[Proposition 4.13]{BlickleTestIdealsViaAlgebras} that the Cartier-algebra associated to $(R, \Gamma, f_1^{b_1} \dots f_m^{b_m})$ is also gauge bounded as claimed.  To finish the proof, apply \cite[Theorem 4.14]{BlickleTestIdealsViaAlgebras}.
\end{itemize}
In either case, the result follows easily from the theories previously developed.
\end{proof}

\section{On the question of rationality}

Note that the usual way to prove the rationality of the $F$-jumping numbers employs the following theorem. First recall that a pair $(X, \Delta)$ is called \emph{log $\bQ$-Gorenstein with index $n$} if $n(K_X + \Delta)$ is Cartier and $n > 0$ is the smallest integer with this property.

\begin{theorem}\cite{BlickleMustataSmithDiscretenessAndRationalityOfFThresholds}, \cite{BlickleSchwedeTakagiZhang}
 Suppose $(X, \Delta)$ is log $\bQ$-Gorenstein with index $n$ such that $n$ divides $(p^e - 1)$ for some fixed $e > 0$.  Further suppose that $\ba$ is an ideal sheaf of $X$.  Then if $t_0$ is an $F$-jumping number of $\tau(X; \Delta,  \ba^t)$, then $p^e t_0$ is also an $F$-jumping number.
\end{theorem}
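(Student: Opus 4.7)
The plan exploits the hypothesis $n \mid p^e - 1$ to single out a canonical generator of the relevant Hom-module. Since $n(K_X + \Delta)$ is Cartier and $n$ divides $p^e - 1$, the divisor $(p^e - 1)(K_X + \Delta)$ is Cartier and principal on $X = \Spec R$, which forces $(p^e-1)\Delta$ to be an integer Weil divisor; hence $\Hom_R(F^e_*R((p^e-1)\Delta), R)$ is a free $F^e_*R$-module of rank one. Fix a generator $\phi$. Its iterates $\phi^k : F^{ek}_*R \to R$ have associated divisor $(p^{ek}-1)\Delta$ and so likewise freely generate the corresponding Hom at level $ek$. By Remark \ref{remTestIdealViaTestElements}, the test ideal then admits the concrete description
\[
\tau_b(R; \Delta, \ba^t) \;=\; \sum_{k \geq 0} \phi^k\!\left(F^{ek}_*\!\left(c\,\ba^{\lceil t(p^{ek}-1)\rceil}\right)\right)
\]
for any big sharp test element $c$; equivalently, it is the smallest nonzero ideal $J$ stable under the single operation $J \mapsto \phi(F^e_*(\ba^{\lceil t(p^e-1)\rceil} J))$.

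The core of the argument is to relate this description at parameter $t$ with the corresponding one at parameter $p^e t$. Using the telescoping identity $p^e t(p^{ek}-1) = t(p^{e(k+1)}-1) - t(p^e-1)$ to reindex $k \leftrightarrow k+1$, and repeatedly invoking the projection formula $\phi(F^e_*(a^{p^e}x)) = a\,\phi(F^e_*x)$ to migrate $p^e$-th power factors of $\ba$ from inside to outside $F^e_*$, one arrives at a Frobenius-scaling identity of the form
\[
\tau_b(R; \Delta, \ba^{p^e t}) \;=\; \phi\!\left(F^e_*\!\left(\ba^{\lceil t(p^e-1)\rceil}\, \tau_b(R; \Delta, \ba^{t})\right)\right).
\]
With this identity the jumping-number conclusion follows by contradiction: if $\tau_b(R; \Delta, \ba^{p^e t_0 - \delta}) = \tau_b(R; \Delta, \ba^{p^e t_0})$ for some $\delta > 0$, applying the scaling identity at $t_0$ and at $t_0 - \delta/p^e$ (choosing $\delta$ small enough that $\lceil t(p^e-1)\rceil$ is constant across the interval) yields $\phi(F^e_*(I\,\tau_b(R;\Delta,\ba^{t_0}))) = \phi(F^e_*(I\,\tau_b(R;\Delta,\ba^{t_0 - \delta/p^e})))$ for a common ideal $I$; by the minimality characterization of the test ideal, this forces $\tau_b(R; \Delta, \ba^{t_0}) = \tau_b(R; \Delta, \ba^{t_0 - \delta/p^e})$, contradicting the assumption that $t_0$ is a jumping number.

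The main technical obstacle is the behavior of the ceiling function and Frobenius powers: the naive identity $\lceil p^e x\rceil = p^e \lceil x\rceil$ fails, and for non-principal $\ba$ the strict containment $(\ba^m)^{[p^e]} \subsetneq \ba^{p^em}$ means the projection formula moves ideal factors cleanly across $F^e_*$ only in one direction. Controlling these discrepancies — so the Frobenius-scaling identity is a genuine equality rather than a pair of one-sided inclusions — is the delicate point, and is resolved by iterating $\phi$ sufficiently many times to absorb the error into the stable image of the test-ideal sum.
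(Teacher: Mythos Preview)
First, note that the paper does not actually supply a proof of this theorem: it is stated in Section~4 purely as a citation from \cite{BlickleMustataSmithDiscretenessAndRationalityOfFThresholds} and \cite{BlickleSchwedeTakagiZhang}, so there is no in-paper argument to compare against. Your overall strategy --- use the hypothesis $n \mid p^e - 1$ to get a single $F^e_*R$-module generator $\phi$ of $\Hom_R(F^e_*R((p^e-1)\Delta),R)$, express the test ideal via iterates of $\phi$, and then deduce the jumping-number statement from a relation between $\tau_b(R;\Delta,\ba^t)$ and $\tau_b(R;\Delta,\ba^{p^e t})$ --- is exactly the strategy of those references.

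However, the central ``Frobenius-scaling identity'' you write down is false as stated, and this is a genuine gap. You claim
\[
\tau_b(R;\Delta,\ba^{p^e t}) \;=\; \phi\!\left(F^e_*\bigl(\ba^{\lceil t(p^e-1)\rceil}\,\tau_b(R;\Delta,\ba^t)\bigr)\right).
\]
But by the very minimality characterization you invoke (``the smallest nonzero ideal $J$ stable under $J \mapsto \phi(F^e_*(\ba^{\lceil t(p^e-1)\rceil}J))$''), the right-hand side equals $\tau_b(R;\Delta,\ba^t)$ itself: it is contained in $\tau_b(R;\Delta,\ba^t)$ by stability, and it is again a nonzero stable ideal, hence contains $\tau_b(R;\Delta,\ba^t)$ by minimality. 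So your identity would force $\tau_b(R;\Delta,\ba^{p^e t}) = \tau_b(R;\Delta,\ba^t)$ for all $t$, which already fails for $R = k[x]$, $\Delta = 0$, $\ba = (x)$, $t = 1$.

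The identity actually used in the cited references has the opposite shape: one shows
\[
\tau_b(R;\Delta,\ba^{t}) \;=\; \phi\!\left(F^e_*\,\tau_b(R;\Delta,\ba^{p^e t})\right),
\]
with no extra power of $\ba$ and with $t$ and $p^e t$ interchanged relative to your version. Your telescoping computation $p^e t(p^{ek}-1) = t(p^{e(k+1)}-1) - t(p^e - 1)$ is pointing in exactly this direction --- reindex $k \mapsto k+1$ after applying a single outer $\phi$, and note the exponent drops by roughly $t(p^e-1)$, which is absorbed not by multiplying by $\ba^{\lceil t(p^e-1)\rceil}$ but by the fact that the relevant power of $\ba$ is already smaller. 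Once you have this corrected identity, your contrapositive argument goes through verbatim: if $\tau_b(\ba^{p^e t_0}) = \tau_b(\ba^{p^e t_0 - \delta})$, apply $\phi(F^e_*(-))$ to both sides to get $\tau_b(\ba^{t_0}) = \tau_b(\ba^{t_0 - \delta/p^e})$.
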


However, without the ``index not divisible by $p$'' assumption, this theorem is false.  Consider the following example (which in some sense typical by Remark \ref{remNeedOneOverP}).

\begin{example}
Set $X = \bA^1_k = \Spec k[x]$, $\Delta = {1 \over p} \Div(x)$ and $\ba = (x)$.  Then the $F$-jumping number of $(X, \Delta, \ba^t) = (X, (x)^{1/p} \ba^t)$ with respect to $t$ are
\[
 {p - 1 \over p}, {2p - 1 \over p}, {3p - 1 \over p}, \ldots .
\]
In particular, $p$ (or $p^e)$) times any of them is not an $F$-jumping number.
\end{example}

\def\cprime{$'$} \def\cprime{$'$}
  \def\cfudot#1{\ifmmode\setbox7\hbox{$\accent"5E#1$}\else
  \setbox7\hbox{\accent"5E#1}\penalty 10000\relax\fi\raise 1\ht7
  \hbox{\raise.1ex\hbox to 1\wd7{\hss.\hss}}\penalty 10000 \hskip-1\wd7\penalty
  10000\box7}
\providecommand{\bysame}{\leavevmode\hbox to3em{\hrulefill}\thinspace}
\providecommand{\MR}{\relax\ifhmode\unskip\space\fi MR}
\providecommand{\MRhref}[2]{%
  \href{http://www.ams.org/mathscinet-getitem?mr=#1}{#2}
}
\providecommand{\href}[2]{#2}

\end{document}